\newtheorem{Theorem}{Theorem}[section]
\newtheorem{Lemma}[Theorem]{Lemma}
\newtheorem{Remark}[Theorem]{Remark}
\def\bfm#1{\protect{\makebox{\boldmath $#1$}}}
\def\u {\bfm{u}}
\def\c {\bfm{c}}
\def\pphi {\bfm{\phi}}
\def\p {\bfm{p}}
\newcommand{\R}{\mathbb R}
\newcommand{\N}{\mathbb N}
\newcommand{\Z}{\mathbb Z}
\newcommand{\Om}{\Omega}
\newcommand{\wh}{\widehat}
\newcommand{\wt}{\widetilde}
\newcommand{\q}{\quad}
\begin{document}
\date{\small\textsl{\today}}
\title{A meshless based method for solution of integral equations: Improving the error analysis}
\author{
%% second author
\large D. Mirzaei \\
\small{Department of Mathematics, University of Isfahan, 81745-163 Isfahan, Iran.}
} % end of \author
\date{\small{March 10, 2012, Revised in August 25, 2015}}
\maketitle
%-------------------------------------------------------------
\begin{abstract}
This draft concerns the error analysis of a collocation method based on the moving least squares (MLS) approximation
for integral equations, which improves the results of \cite{mirzaei-dehghan:2010-1} in the analysis part.
This is mainly a translation from Persian of some parts of Chapter 2 of the author's PhD thesis in 2011.
\\
%\textbf{{\em Keywords}}: Meshless methods; Radial basis functions. \\
%\textbf{{\em 2000 AMS Mathematics Subject Classification:}}
\end{abstract}
%%%%%%%%%%%%%%%%%%%%%%%%%%%%%%%%%
%%%%%%%%%%%%%%%%%%%%%%%%%%%%%%%%
\section{Introduction}
In \cite{mirzaei-dehghan:2010-1} a meshless method based on the {\em moving least squares} (MLS) was applied for integral equations of the second kind, and an error analysis was presented for
Fredholm integral equations. Here a more interesting presentation of the MLS approximation and its error estimation are reported, and  the analysis of the MLS collocation method for Fredholm integral equations of the second kind is revised. The analysis is mainly based on the excellent book \cite{atkinson:1997-1}.
\section{MLS approximation}
Let $\Omega\subset \R^d$, for positive integer $d$, be a nonempty and bounded set. Assume,
$$
X = \{x_1, x_2,\ldots ,x_N\}\subset \Omega,
$$
is a set containing $N$ scattered points.
The {\em fill distance} of $X$ is defined to be
\begin{equation*}
h_{X,\Omega}=\sup_{x\in\Omega}\min_{1\leqslant j\leqslant N}\|x-x_j\|_2,
\end{equation*}
and the {\em separation distance} is defined by
\begin{equation*}
q_{X}=\frac{1}{2}\min_{i\neq j}\|x_i-x_j\|_2.
\end{equation*}
A set $X$ of data sites is said to be
{\em quasi-uniform} with respect to a constant $c_{\mathrm{qu}}>0$ if
\begin{equation}\label{quasi-uniform}
q_X\leqslant h_{X,\Omega}\leqslant c_{\mathrm{qu}} q_X.
\end{equation}
Henceforth, we use $\mathbb P_m^d$, for $m\in \N_{0}=\{n\in\Z, n\geqslant 0\}$,
as the space of $d$-variable polynomials of degree at most $m$ of dimension
$Q:={m+d\choose d}$. A basis for this space is denoted by $\{p_1,\ldots,p_Q\}$ or $\{p_\alpha\}_{0\leqslant|\alpha|\leqslant m}$.

A set $X=\{x_1,\ldots,x_N\}\subset \R^d$ with $N\geqslant Q$ is called $\mathbb P_m^d$-unisolvent if the zero polynomial is the only polynomial
from $\mathbb P_m^d$ that vanishes on $X$.

%As usual, $B(x,r)$ stands for the ball of radius $r$ centered at $x$.
The MLS provides an approximation $s_{u,X}$ of $u$ in terms of values $u(x_j)$ at
centers $x_j$ by
\begin{equation}\label{mlsapp}
u(x)\approx s_{u,X}(x) = \sum_{j=1}^N \phi_j(x) u(x_j), \quad x\in \Omega,
\end{equation}
where $\phi_j$ are \emph{MLS shape functions} given by
\begin{equation*}%\label{shapefunc-aj}
\phi_j(x)=w(x,x_j)\sum_{k=1}^Q\lambda_k(x) p_k(x_j),
\end{equation*}
where the influence of the centers is governed by a weight function $w_j(x)=w(x,x_j)$, which vanishes for arguments $x,x_j\in\Omega$ with $\|x-x_j\|_2$ greater than
a certain threshold, say $\delta$. Thus we can define $w_j(x)=K((x-x_j)/\delta)$ where $K:\R^d\to\R$ is a nonnegative function with support in the unit ball $B(0,1)$. Coefficients $\lambda_k(x)$ are the unique solution of
\begin{equation*}%\label{system-lambdak}
\sum_{k=1}^Q\lambda_k(x)\sum_{j\in J(x)}w_j(x)p_k(x_j)p_\ell(x_j)=p_\ell(x),\quad 0\leqslant \ell\leqslant Q,
\end{equation*}
where $J(x)=\{j: \|x-x_j\|_2\leqslant \delta\}$ is the family of indices of points in the support of the weight function.
In vector form
\begin{equation*}
\pphi(x) = W(x)P^T(PW(x)P^T)^{-1}\p(x),
\end{equation*}
where $W(x)$ is the diagonal matrix carrying the weights $w_j(x)$ on its diagonal, $P$ is a $Q\times \#J(x)$ matrix of values $p_k(x_j)$, $j\in J(x)$, $1\leqslant k\leqslant Q$, and $\p=(p_1,\ldots ,p_Q)^T$.
In the MLS, one finds the best approximation to $u$ at point $x$, out of $\mathbb P_m^d$ with respect to a discrete $\ell^2$ norm induced by
a \emph{moving} inner product, where the corresponding weight function depends not only on points $x_j$ but also on the evaluation point $x$ in question.
Note that, if for every $x\in\Omega$ the set $\{x_j: j\in J(x)\}$ is $\mathbb P_m^d$-unisolvent then $A(x)=PW(x)P^T$ is a symmetric positive definite matrix.
More details can be found in Chapter 4 of \cite{wendland:2005-1}.
In what follows we will assume that $K$ is nonnegative and continuous on $\R^d$ and positive on the ball $B(0,1/2)$. In many applications we can assume that
$$
K(x)=\varphi(\|x\|_2), \quad x\in \R^d,
$$
meaning that $K$ is a radial function. Here $\varphi:[0,\infty)\to \R$ is positive on $[0,1/2]$, supported in $[0,1]$ and its even extension is nonnegative and continuous on $\R$. If we assume that $K\in C^k(\R^d)$ then $\phi_j\in C^n(\Omega)$ where $n = \min\{k,m\}$.
This implies that $s_{u,X}\in C^n(\Omega)$.

It is well-known that \cite{wendland:2005-1} if $X=\{x_1,\ldots,x_N\}$ is a quasi-uniform set in $\Omega\subset\R^d$, where $\Omega$ is
a compact set and satisfies an interior cone condition,
then the MLS shape functions $\{\phi_j\}$
provide a {\em stable local polynomial reproduction} of degree $m\in \N_0$ on $\Omega$, i.e.
there exist constants
$h_0,C_{1},C_{2}>0$ independent of $X$ such that for every $x\in\Omega$
\begin{enumerate}
\item $ \sum_{j=1}^N\phi_{j}(x) p(x_j)=p(x),
\,\forall p\in\mathbb P_m^d,$
\item $ \sum_{j=1}^N|\phi_{j}(x)|\leqslant C_{1}$,
\item $\phi_{j}(x)=0\,\, \mbox{if}\,\, \|x-x_j\|_2>\delta=2C_{2}h_{X,\Omega}$,
\end{enumerate}
for all
$X$ with $h_{X,\Omega}\leqslant h_0$.

Note that, a set $\Omega\subset \mathbb R^d$ is said to satisfy an
interior cone condition if there exist an angle
$\theta\in(0,\pi/2)$ and a radius $r>0$ such that for every
$x\in\Omega$ a unit vector $\xi(x)$ exists such that the cone
$$
C(x,\xi,\theta,r):=\big\{ x+ty: y\in\R^d, \|y\|_2=1, y^T\xi\geqslant \cos\theta, t\in[0,r]\big\}
$$
is contained in $\Omega$.

The following theorem shows that the MLS approximation converges uniformly for continuous functions on compact domain $\Omega$. 
\begin{Theorem}\label{thm-convergence}
Suppose that $\Omega\subset\R^d$ is compact and satisfies an interior cone condition.
The MLS approximation $s_{u,X}$ converges uniformly for all continuous function $u$, as $h_{X,\Omega}$ goes to zero for quasi-uniform sets $X$.
\end{Theorem}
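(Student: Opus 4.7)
The plan is to exploit the three properties of the stable local polynomial reproduction already stated before the theorem, together with the uniform continuity of $u$ on the compact set $\Omega$.

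First I would observe that, since $\Omega$ is compact, any continuous $u:\Omega\to\R$ is uniformly continuous, so its modulus of continuity
\[
\omega(u,\eta):=\sup\{|u(x)-u(y)|:x,y\in\Omega,\ \|x-y\|_2\leqslant \eta\}
\]
satisfies $\omega(u,\eta)\to 0$ as $\eta\to 0^+$. Next I would use property (1) of the MLS shape functions with the constant polynomial $p\equiv 1\in\mathbb P_m^d$ (valid for any $m\in\N_0$) to obtain the partition-of-unity identity $\sum_{j=1}^N \phi_j(x)=1$ for every $x\in\Omega$.

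The crucial step is to rewrite the pointwise error using this identity:
\[
u(x)-s_{u,X}(x)=\sum_{j=1}^N \phi_j(x)\bigl(u(x)-u(x_j)\bigr),\qquad x\in\Omega.
\]
By property (3), only those indices $j$ with $\|x-x_j\|_2\leqslant 2C_2 h_{X,\Omega}$ contribute to the sum, so for each such $j$ we have $|u(x)-u(x_j)|\leqslant \omega(u,2C_2 h_{X,\Omega})$. Combining this with property (2) yields
\[
|u(x)-s_{u,X}(x)|\leqslant \omega(u,2C_2 h_{X,\Omega})\sum_{j=1}^N|\phi_j(x)|\leqslant C_1\,\omega(u,2C_2 h_{X,\Omega}),
\]
and the right-hand side is independent of $x$.

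Finally I would let $h_{X,\Omega}\to 0$ along a sequence of quasi-uniform sets with $h_{X,\Omega}\leqslant h_0$; the bound constants $C_1$, $C_2$, $h_0$ from the stable local polynomial reproduction remain fixed, and uniform continuity of $u$ forces $\omega(u,2C_2 h_{X,\Omega})\to 0$. This proves uniform convergence $\|u-s_{u,X}\|_{\infty,\Omega}\to 0$. There is really no substantial obstacle here: the quasi-uniformity and interior cone condition are only used to guarantee that $C_1,C_2,h_0$ from the pre-stated stable reproduction result exist; the rest is a one-line application of uniform continuity to a partition-of-unity representation of the error.
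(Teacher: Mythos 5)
Your proposal is correct and is essentially the paper's own argument: the paper reproduces the constant polynomial $p_0\equiv u(x)$ through the shape functions, which is algebraically the same as your partition-of-unity rewriting $u(x)-s_{u,X}(x)=\sum_j\phi_j(x)\bigl(u(x)-u(x_j)\bigr)$, and then both proofs invoke the local support (property 3) and $L_1$-stability (property 2) to get the bound $C_1\,\omega(u,2C_2h_{X,\Omega})$ and conclude by uniform continuity on the compact set $\Omega$. No gaps.
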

\begin{proof}
For a fixed $x\in\Omega$, suppose that $p_0$ is the constant polynomial with $p_0(x)=u(x)$.
The conditions of Theorem ensure that the MLS shape functions provide a stable local polynomial reproduction. Thus we can write
\begin{align*}
|u(x)-s_{u,X}(x)| & = \Big| p_0(x)-\sum_{j=1}^N \phi_j(x)u(x_j)\Big|\\
& =\Big| \sum_{j=1}^N \phi_j(x)\big(p_0(x_j)-u(x_j)\big)\Big|\\
&\leq \sum_{j=1}^N |\phi_j(x)|\big| p_0(x_j)-u(x_j)\big|\\
&\leq C_1 \|u-p_0\|_{\infty,B(x,\delta)\cap\Omega} \\
&=C_1 \max_{y\in B(x,\delta)\cap\Omega}|u(y)-u(x)|\\
&\leq C_1 \omega(u,\delta),
\end{align*}
where $\omega(u,\delta)$ is the modulus of continuity of $u$. The compactness of $\Omega$ and $\delta = ch_{X,\Omega}$ give the
uniform convergence.
\end{proof}
Finally, the following error estimation can be proved for smoother functions. Note that a domain with a Lipschitz boundary, automatically satisfies an interior cone condition. The reader is referred to
\cite{mirzaei:2015-1} for proof.
\begin{Theorem}\label{thm-errorLq}
Suppose that $\Omega\subset\R^d$ is a compact set with a Lipschitz boundary.
Let $m$ be a positive integer. If $u\in W^{m+1}_\infty(\Omega)$,
there exist constants $C > 0$ and $h_0>0$ such that
for all $X=\{x_1,\ldots ,x_N\}\subset \Omega$ with $h_{X,\Omega}\leqslant h_0$ which are quasi-uniform with the same $c_{\mathrm{qu}}$ in (\ref{quasi-uniform}), the estimate
\begin{equation}\label{LPRerrorLp}
\|u-s_{u,X} \|_{L_\infty(\Omega)} \leqslant Ch_{X,\Omega}^{m+1}\|u\|_{W^{m+1}_\infty(\Omega)}
\end{equation}
holds.
\end{Theorem}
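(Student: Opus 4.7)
The plan is to combine the stable local polynomial reproduction property (already available from the conditions of the theorem, since a Lipschitz boundary automatically yields the interior cone condition) with a local polynomial approximation of $u$ of order $m+1$ furnished by Taylor's theorem. Fix an arbitrary $x\in\Omega$; because $u\in W^{m+1}_\infty(\Omega)$ on a Lipschitz domain, a Stein extension gives $\tilde u\in W^{m+1}_\infty(\R^d)$ with $\|\tilde u\|_{W^{m+1}_\infty(\R^d)}\leqslant C\|u\|_{W^{m+1}_\infty(\Omega)}$. Let $T_x\in \mathbb P_m^d$ denote the Taylor polynomial of $\tilde u$ of degree $m$ centered at $x$, so that $T_x(x)=u(x)$ and, by the integral remainder,
\begin{equation*}
|\tilde u(y)-T_x(y)|\leqslant \frac{\|x-y\|_2^{m+1}}{(m+1)!}|\tilde u|_{W^{m+1}_\infty(\R^d)},\qquad y\in\R^d.
\end{equation*}

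The key algebraic step is the ``add-and-subtract'' trick: using $T_x(x)=u(x)$ together with the reproduction identity $\sum_{j=1}^N\phi_j(x)T_x(x_j)=T_x(x)$, we get
\begin{equation*}
u(x)-s_{u,X}(x)=T_x(x)-\sum_{j=1}^N\phi_j(x)u(x_j)=\sum_{j=1}^N\phi_j(x)\bigl(T_x(x_j)-u(x_j)\bigr).
\end{equation*}
Only indices $j$ with $\|x-x_j\|_2\leqslant \delta=2C_2 h_{X,\Omega}$ contribute (property~3 of stable local polynomial reproduction), so each factor in the difference is controlled by $(2C_2 h_{X,\Omega})^{m+1}/(m+1)!$ times $\|\tilde u\|_{W^{m+1}_\infty(\R^d)}$. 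Applying the $\ell^1$ bound $\sum_{j}|\phi_j(x)|\leqslant C_1$ from property~2 yields
\begin{equation*}
|u(x)-s_{u,X}(x)|\leqslant \frac{C_1(2C_2)^{m+1}}{(m+1)!}\,h_{X,\Omega}^{m+1}\,\|\tilde u\|_{W^{m+1}_\infty(\R^d)}\leqslant C\,h_{X,\Omega}^{m+1}\,\|u\|_{W^{m+1}_\infty(\Omega)}.
\end{equation*}
Taking the supremum over $x\in\Omega$ gives the stated estimate, valid whenever $h_{X,\Omega}\leqslant h_0$ with the $h_0$ provided by the reproduction theorem.

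The only delicate point is that the pointwise Taylor estimate needs to be applied at points $x_j\in B(x,\delta)\cap\Omega$ which may lie in a component of $\Omega\cap B(x,\delta)$ not connected to $x$ by a straight segment inside $\Omega$; this is precisely why the Lipschitz regularity of the boundary is invoked, since it guarantees a bounded Stein (or Whitney-type) extension of $u$ to $\R^d$ with norm control, so that Taylor's theorem may be applied to $\tilde u$ on the full ball $B(x,\delta)$ without worrying about the geometry of $\Omega$ near the boundary. If one prefers not to extend, an alternative is to use the averaged Taylor polynomial on a cone $C(x,\xi(x),\theta,r)\subset\Omega$ from the interior cone condition together with the Bramble--Hilbert lemma; this gives the same rate with a slightly more technical constant but avoids the extension theorem.
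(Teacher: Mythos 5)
Your proof is correct. Note that the paper itself does not prove Theorem \ref{thm-errorLq} --- it defers to the reference \cite{mirzaei:2015-1} --- so there is no in-paper argument to compare against; but your route (Stein extension on the Lipschitz domain, degree-$m$ Taylor polynomial centered at the evaluation point, then the three properties of the stable local polynomial reproduction: reproduction, $\ell^1$-stability, and support radius $\delta=2C_2h_{X,\Omega}$) is exactly the standard one used for such estimates, and your observation that the extension is what handles trial points $x_j$ in components of $B(x,\delta)\cap\Omega$ not visible from $x$ is the right reason the Lipschitz hypothesis is needed. The only cosmetic caveat is that for $u\in W^{m+1}_\infty$ the extension $\tilde u$ is of class $C^{m,1}$ rather than $C^{m+1}$, so the classical integral form of the Taylor remainder should be replaced by the Lipschitz-remainder estimate for $C^{m,1}$ functions (or obtained by mollification); this changes only the unimportant constant, not the rate $h_{X,\Omega}^{m+1}$.
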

In numerical implementation,
for computing the MLS approximation at a sample point $\wh x\in \Omega$, the {\em shifted} and {\em scaled} polynomial basis functions
$$
\left\{ \frac{(x-\wh x)^\alpha}{h_{X,\Omega}^{|\alpha|}} \right\}_{0\leq\alpha\leq m}
$$
are used as a basis for $\mathbb P_m^d$. In fact we change the basis functions as the evaluation point is changed. This leads to a more stable algorithm.
For example, the use of shifted and scaled basis functions overcomes the instability of the reported results in Tables 1, 4 and 6 of \cite[Section 6]{mirzaei-dehghan:2010-1}
for quadratic basis functions.
%%%%%%%%%%%%%%%%%%%%%%5
%%%%%%%%%%%%%%%%%%%%%%%
\section{The MLS collocation method}
A Fredholm integral equation of the second kind can be written as
\begin{equation}\label{5inteq}
\lambda u(x)+\int_\Om \kappa(x,s) u(s) ds =f(x),\q x\in\Om\subset\R^d,
\end{equation}
where $u$ is an unknown function, $\lambda$ is a real parameter, $\Omega$ is a compact
domain in $\R^d$, $f$ is a given continuous right-hand side function, and $\kappa$ is a given continuous kernel in $\Omega\times \Omega$.
The above integral equation can be written in the following abstract form,
\begin{equation*}%\label{5compact}
(\lambda-\mathcal F)u=f,
\end{equation*}
where
$$
\mathcal Fu=\int_\Omega\kappa(x,s)u(s)ds.
$$
We consider a set of {\em trial points}
$ X=\{x_1,x_2,\ldots,x_N\}\subset \Om$
with fill distance
$h_{X,\Om}$.
Regarding the previous section, we assume that $X$ is a {quasi-uniform} set and admits a well-defined MLS approximation. Suppose that
$\phi_1,\ldots,\phi_N$
are the MLS shape functions constructed by polynomial space $\mathbb P_m^d$ and weight function $K\in C^k(\R^d)$, $k\in \N_0$. Define
\begin{equation*}%\label{5span}
V_N:=\mbox{span} \{\phi_1,\ldots,\phi_N\},
\end{equation*}
as a finite dimensional subspace of $C(\Omega)$.
According to \eqref{mlsapp}, the MLS approximation
$\wh u:=s_{u,X}$
of
$u$
is
\begin{equation*}%\label{5mlsapprox}
u\approx \wh u=\sum_{j=1}^N \phi_j u(x_j) \in V_N.
\end{equation*}
Moreover, we define a projection operator
$\mathcal P_N: C(\Omega)\mapsto V_N$	
which interpolates any continuous function into $V_N$
on {\em test points}
$Y=\{y_1,\ldots,y_M\}\subset \Om$ .
More precisely, for all
$u\in C(\Omega)$
we define
\begin{equation*}
\mathcal P_N u:=\sum_{j=1}^N \phi_j c_j,\; \mbox {with}\; \mathcal P_Nu(y_i)=u(y_i),\, 1\leq i\leq M.
\end{equation*}
In what follows, we let $M=N$ and we assume that, the distribution of both sets of test and trial points are well enough to ensure the non-singularity of $\Phi_N =\big(\phi_j(y_k)\big)_{i,j=1}^N$. If it happens then
$\mathcal P_N$ is well-defined.
Since $\wh u\in V_N$, we simply have
$ \mathcal P_N\wh u=\wh u$.
Replacing $u$ by $\wh u$ in
(\ref{5inteq})
we get
\begin{equation*}%\label{5eqapprox1}
\sum_{j=1}^N\left[\lambda \phi_j(x)+\int_\Om \kappa(x,s)
\phi_j(s) ds\right] u(x_j) = f(x)+ r(x),
\end{equation*}
where
$r(x)$
is the reminder. In the {\em collocation} method we assume that the reminder is vanished at test points $Y$, i.e.
$$
\mathcal P_N r = 0,
$$
which leads to
\begin{equation*}%\label{5eqapprox2}
\sum_{j=1}^N\left[\lambda \phi_j(y_i)+\int_\Om \kappa(y_i,s)
\phi_j(s) ds\right] u(x_j) = f(y_i),\q 1\leq i\leq N,
\end{equation*}
or in an abstract form
\begin{equation*}%\label{5eqapprox2comp}
\mathcal P_N(\lambda-\mathcal F)\wh u = \mathcal P_Nf.
\end{equation*}
According to the property
$ \mathcal P_N\wh u=\wh u$, we have
\begin{equation*}%\label{5eqapprox2comp}
(\lambda-\mathcal P_N\mathcal F)\wh u = \mathcal P_Nf.
\end{equation*}
The involved integral can be treated by a numerical quadrature of the form
\begin{equation}\label{5quadrature}
\int_\Om g(s)\,ds\approx \sum_{k=1}^{Q_N}g(\tau_k)\omega_k,\q g\in C(\Om),
\end{equation}
where
$\{\tau_k\}$
and
$\{\omega_k\}$, for $1\leq k\leq Q_N$
are integration points and weights, respectively.
We assume that for all
$g\in C(\Om)$
the quadrature converges to the exact value of integral as $Q_N$ increases.
Now we define
\begin{equation}\label{5integraloper}
\mathcal F_Nu(x):=\sum_{k=1}^{Q_N}\kappa(x,\tau_k)u(\tau_k)\omega_k,\q x\in\Om,\, u\in C(\Om).
\end{equation}
If we replace $\mathcal F \wh u$ by $\mathcal F_N \wh u$,
we will get
\begin{equation}\label{5eqapprox3}
\sum_{j=1}^N\left[\lambda \phi_j(y_i)+\sum_{k=1}^{Q_N} \kappa(y_i,\tau_k)
\phi_j(\tau_k) \omega_k\right] \wt u_j = f(y_i),\q 1\leq i\leq N,
\end{equation}
where $\wt u_j$ are the approximation values of $u(x_j)$. Solving the linear system of equations \eqref{5eqapprox3}
gives the values $\wt u_j,\, j=1,\ldots,N$, and
%For a matrix form,
%\begin{align*}
%F_{i,j}&=\lambda \phi_j(y_i)+\sum_{k=1}^K \kappa(y_i,\tau_k)
%\phi_j(\tau_k) \omega_k, \q 1\leq i\leq M,\; 1\leq j\leq N\\
%\ff&=(f(y_1),...,f(y_M))^T,\\
%\wt\u&=(\wt u_1,...,\wt u_N)^T,
%\end{align*}
%give
%\begin{equation*}\label{5system1}
%F\wt \u=\ff.
%\end{equation*}
%This system is possibly rectangular and a least square solver can be used for handling it.
finally one can approximate
\begin{equation*}
u(x)\approx u_N(x)=\sum_{j=1}^N \phi_j(x)\wt u_j,
\end{equation*}
for any $x\in\Om$.
The abstract form of equation
(\ref{5eqapprox3})
is
\begin{equation*}%\label{5eqapprox3comp}
\mathcal P_N(\lambda-\mathcal F_N) u_N = \mathcal P_N f.
\end{equation*}
Since
$u_N\in V_N$,
we have
$\mathcal P_N u_N=u_N$ and
the above equation can be rewritten as
\begin{equation}\label{5eqapprox3comp}
(\lambda-\mathcal P_N\mathcal F_N) u_N = \mathcal P_N f
\end{equation}
which shows that the scheme is a {\em discrete collocation method} \cite{atkinson:1997-1}.
Consequently an {\em iterated discrete collocation} solution can be obtained. For this purpose  we set
\begin{equation}\label{5iterated}
v_N(x)=\frac{1}{\lambda}[f(x)+\mathcal F_Nu_N(x)], \q \forall x\in\Om,
\end{equation}
and by applying the operator
$\mathcal P_N$
on both sides of
(\ref{5iterated}),
and using the relation
(\ref{5eqapprox3comp})
we simply have
$$
\mathcal P_N v_N=u_N.
$$
Thus we conclude
\begin{equation}\label{5iteratedcomp}
(\lambda-\mathcal F_N\mathcal P_N)v_N=f.
\end{equation}
Equations
(\ref{5eqapprox3comp})
and
(\ref{5iteratedcomp})
will be referred in the next section when we will try to give the error bounds for $u-u_N$ and $u-v_N$.

Usually and in this paper the case
$M=N$ is assumed which leads to a square final linear system. In addition we can assume that
$X=Y$.
The case
$M>N$
is called {\em oversampling} which may help if there is a problem with solvability.

\section{Error Analysis}
As we discussed in the previous section, the method is a discrete collocation, and the solvability of the integral equation
(\ref{5inteq})
and some insights on integration operators
$\mathcal F_N$
and projections
$\mathcal P_N$
are required to obtain the final error bound. Moreover, an error bound for the MLS approximation should be invoked.

According to
(\ref{5integraloper})
we define
$$
\|\mathcal F_N\|:=\max_{x\in\Om}\sum_{k=1}^{K_N}|\omega_k \kappa(x,\tau_k)|.
$$
A direction which makes the analysis possible is to seek for characteristic properties of operators $\mathcal F_N$ which imply
%One can show that
%$ \|\mathcal F-\mathcal F_N\|\geq \|\mathcal F\|$
%thus
%$ \|\mathcal F-\mathcal F_N\|\rightarrow 0$
%fails to satisfy. But we can use the following lemma from \cite[Lemma 4.1.1]{atkinson:1997-1}:
%\begin{Lemma}\label{5lemmakkn}
%Let
%$\Omega$
%be a closed, bounded set in
%$\R^d$ and
%$\kappa(x,s)$
%be continuous on
%$\Omega\times\Omega$. Let
%the quadrature scheme
%(\ref{5quadrature})
%be convergent for all continuous functions on
%$\Omega$.
%Then
\begin{equation}\label{5kkn}
\|(\mathcal F-\mathcal F_N)\mathcal F\|\rightarrow 0,\q \|(\mathcal F-\mathcal F_N)\mathcal F_N\|\rightarrow 0,
\; \mbox{ as }\; N\rightarrow\infty.
\end{equation}
%\end{Lemma}
%We want to know the characteristic
%properties of operators $\mathcal F_N$, which imply \eqref{5kkn}.
For this, we
assume that $\{\mathcal F_N, \, N\geq1\}$
possesses the following properties:
\begin{enumerate}
\item
$\mathcal X$ is a Banach space, and
$ \mathcal F$ and
$ \mathcal F_N$,
for
$N\geq1$,
are linear operator on
$\mathcal X$
into
$\mathcal X$.
\item
$\mathcal F_Nu\rightarrow \mathcal Fu$ as $N\rightarrow \infty$,
for all
$u\in\mathcal X$.
\item
The set $\{\mathcal F_N,\, N\geq1\}$ is collectively compact which means that
$ \{\mathcal F_Nu,\, N\geq1,\; \|u\|\leq1\}$
has a compact closure in
$\mathcal X$.
\end{enumerate}
Then $\{\mathcal F_N\}$ is said to be a {\em collectively compact family of pointwise
convergent operators}. According to
\cite[Lemma 4.1.2]{atkinson:1997-1},
if
$\{\mathcal F_N, \, N\geq1\}$
is a
 collectively compact family of pointwise convergent operators,
then
(\ref{5kkn})
is satisfied.
Finally,
\cite[Theorem 4.1.1]{atkinson:1997-1}
paves the way for finding the final error bound.
\begin{Theorem}\label{5thmst}
Let
$\mathcal{X}$
be a Banach space, let
$\mathcal{S}$
and
$\mathcal{T}$
be bounded operators on
$\mathcal{X}$
to
$\mathcal{X}$ and let
$\mathcal{S}$
be compact. For given
$\lambda\neq0$,
assume
$\lambda-\mathcal T:~\mathcal{X}\longrightarrow\hspace{-.8cm}^{1-1}_{onto}\hspace{.1cm}\mathcal{X}$,
which implies
$(\lambda-\mathcal T)^{-1}$
exists as a bounded operator on
$\mathcal{X}$
to
$\mathcal{X}$. Finally assume
\begin{equation}\label{5thmsteq1}
\|(\mathcal T-\mathcal S)\mathcal S
\|<\frac{|\lambda|}{\|(\lambda-\mathcal T)^{-1}\|},
\end{equation}
then
$(\lambda-\mathcal S)^{-1}$
exists and it is a bounded operator from
$\mathcal{X}$
to
$\mathcal{X}$. In fact, we have
\begin{equation}\label{5thmsteq2}
\|(\lambda-\mathcal S)^{-1}\|\leq\frac{1+\|(\lambda-\mathcal
T)^{-1}\|\|\mathcal S\|}{|\lambda|-\|(\lambda-\mathcal
T)^{-1}\|\|(\mathcal T-\mathcal S)\mathcal S \|}.
\end{equation}
If
$(\lambda-\mathcal T)w=f$
and
$(\lambda-\mathcal S)z=f$,
then
\begin{equation}\label{5thmsteq3}
\|w-z\|\leq\|(\lambda-\mathcal S)^{-1}\|\|\mathcal Tw-\mathcal Sw\|.
\end{equation}
\end{Theorem}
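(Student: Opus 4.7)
My plan is to establish invertibility of $\lambda-\mathcal{S}$ by combining the Fredholm alternative (applicable since $\mathcal{S}$ is compact and $\lambda\neq 0$) with an injectivity argument, then read off the resolvent bound (\ref{5thmsteq2}) from an explicit operator identity, and finally deduce (\ref{5thmsteq3}) from a one-line algebraic manipulation. The main obstacle is that (\ref{5thmsteq1}) only controls $\|(\mathcal{T}-\mathcal{S})\mathcal{S}\|$, not $\|\mathcal{T}-\mathcal{S}\|$, so the naive Neumann factorization of $\lambda-\mathcal{S}$ through $\lambda-\mathcal{T}$ via $(\lambda-\mathcal{T})^{-1}(\mathcal{T}-\mathcal{S})$ is unavailable: every occurrence of the perturbation $\mathcal{T}-\mathcal{S}$ must somehow be composed with $\mathcal{S}$ before the hypothesis can bite.

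The device for achieving this is to use the equation $\mathcal{S}z=\lambda z-f$, which holds for any prospective solution of $(\lambda-\mathcal{S})z=f$, to rewrite $(\mathcal{T}-\mathcal{S})z = \lambda^{-1}[(\mathcal{T}-\mathcal{S})f + (\mathcal{T}-\mathcal{S})\mathcal{S}z]$. Combining this with $(\lambda-\mathcal{T})z = f - (\mathcal{T}-\mathcal{S})z$ (itself an algebraic rearrangement of $(\lambda-\mathcal{S})z=f$) and applying $(\lambda-\mathcal{T})^{-1}$ should yield, after collecting terms,
\[
(I+\mathcal{M})z \;=\; \frac{1}{\lambda}\bigl[f + (\lambda-\mathcal{T})^{-1}\mathcal{S}f\bigr], \qquad \mathcal{M} := \frac{1}{\lambda}(\lambda-\mathcal{T})^{-1}(\mathcal{T}-\mathcal{S})\mathcal{S}.
\]
By hypothesis (\ref{5thmsteq1}), $\|\mathcal{M}\|<1$, so $I+\mathcal{M}$ is invertible via the Neumann series. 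Setting $f=0$ in the displayed identity forces $z=0$, which proves that $\lambda-\mathcal{S}$ is injective; the Fredholm alternative for compact operators then upgrades this to bijectivity, so $(\lambda-\mathcal{S})^{-1}$ exists as a bounded operator on $\mathcal{X}$.

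Taking norms in the displayed identity and using $\|(I+\mathcal{M})^{-1}\|\leq(1-\|\mathcal{M}\|)^{-1}$ together with $\|f+(\lambda-\mathcal{T})^{-1}\mathcal{S}f\|\leq(1+\|(\lambda-\mathcal{T})^{-1}\|\|\mathcal{S}\|)\|f\|$ delivers (\ref{5thmsteq2}) after clearing the $|\lambda|$ denominator in numerator and denominator. For (\ref{5thmsteq3}), subtracting the two equations $(\lambda-\mathcal{T})w=f=(\lambda-\mathcal{S})z$ after adding and subtracting $\mathcal{S}w$ yields the clean identity $(\lambda-\mathcal{S})(w-z) = (\mathcal{T}-\mathcal{S})w = \mathcal{T}w-\mathcal{S}w$; applying $(\lambda-\mathcal{S})^{-1}$ and taking norms concludes.
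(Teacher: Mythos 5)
Your proposal is correct: the displayed identity is exactly the pointwise form of the operator identity $[I+(\lambda-\mathcal T)^{-1}\mathcal S](\lambda-\mathcal S)=\lambda(I+\mathcal M)$, and the injectivity-plus-Fredholm-alternative step, the Neumann-series bound, and the algebraic derivation of (\ref{5thmsteq3}) all go through as you describe. Note that the paper itself gives no proof of this statement --- it is quoted verbatim as Theorem 4.1.1 of Atkinson's book --- and your argument is essentially the standard proof found there, so there is no genuine divergence to report.
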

Now we go back to equations \eqref{5eqapprox3comp} and
(\ref{5iteratedcomp}).
In section 3.4 of
\cite{atkinson:1997-1} it is proved that the existence of the inverse operators
$ (\lambda-\mathcal F_N\mathcal P_N)^{-1}$
and
$ (\lambda-\mathcal P_N\mathcal F_N)^{-1}$
are related to each other. If
$ (\lambda-\mathcal P_N\mathcal F_N)^{-1}$ exists, then so does
$ (\lambda-\mathcal F_N\mathcal P_N)^{-1}$ and
$$
(\lambda-\mathcal F_N\mathcal P_N)^{-1}=\frac{1}{\lambda} [I+\mathcal F_N(\lambda-\mathcal P_N\mathcal F_N)^{-1}
\mathcal P_N].
$$
Conversely, if
$ (\lambda-\mathcal F_N\mathcal P_N)^{-1}$
exists, then so does
$ (\lambda-\mathcal P_N\mathcal F_N)^{-1}$
and
$$
(\lambda-\mathcal P_N\mathcal F_N)^{-1}=\frac{1}{\lambda} [I+\mathcal P_N(\lambda-\mathcal F_N\mathcal P_N)^{-1}
\mathcal F_N].
$$
By combining these, we also have
$$
(\lambda-\mathcal P_N\mathcal F_N)^{-1}\mathcal P_N =\mathcal P_N (\lambda-\mathcal F_N\mathcal P_N)^{-1}.
$$
We can choose to show the existence of either $ (\lambda-\mathcal F_N\mathcal P_N)^{-1}$ or $ (\lambda-\mathcal P_N\mathcal F_N)^{-1}$
whichever is the more convenient, and the existence of the other inverse will
follow immediately.

To use the results of Theorem
\ref{5thmst} for schemes \eqref{5eqapprox3comp} and
(\ref{5iteratedcomp}), we should first prove that
$\{\mathcal F_N\mathcal P_N,\; N\geq1\}$
is a ``collectively compact family of pointwise convergent operators".
To this aim, we need a uniform bound for
 $\|\mathcal P_N\|$.

\begin{Lemma}\label{5lemmapn}
Assume that
$\phi_1,\ldots,\phi_N$
are the MLS shape functions on the quasi uniform set
$X=\{x_1,\ldots,x_N\}$ with fill distance
$h_{X,\Omega}$
on a compact domain
$\Omega$ which satisfies an interior cone condition.
%with a Lipschitz boundary $\partial \Omega$.
If $\|\Phi_N^{-1}\|_\infty=\mathcal O(1)$ independent of $N$ (or $h_{X,\Om}$), then there exists a constant $c_P$ independent of $N$ such that
$ \|\mathcal P_N\|\leq c_P$, and
$\mathcal P_Nu\rightarrow u$ uniformly for all
$u\in C(\Omega)$.
% where $m$ is the degree of the polynomial basis in the MLS approximation.
\end{Lemma}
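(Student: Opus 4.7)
The plan is to unravel the definition of $\mathcal{P}_N$ into a matrix--vector form, bound the expansion coefficients via the hypothesis $\|\Phi_N^{-1}\|_\infty=\mathcal{O}(1)$, and then control the shape-function sum using the stable local polynomial reproduction property that was recalled earlier. For the convergence statement, I will exploit the fact that $\mathcal{P}_N$ is a projection onto $V_N$ and combine it with Theorem~\ref{thm-convergence}.

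In detail, for $u\in C(\Omega)$, write $\mathcal{P}_N u = \sum_{j=1}^N \phi_j c_j$ with coefficient vector $\c=(c_1,\ldots,c_N)^T$ determined by the interpolation conditions $\mathcal{P}_N u(y_i)=u(y_i)$, i.e.\ $\Phi_N \c = \u_Y$, where $\u_Y=(u(y_1),\ldots,u(y_N))^T$. Hence $\c = \Phi_N^{-1}\u_Y$ and
$$
\|\c\|_\infty \leq \|\Phi_N^{-1}\|_\infty \|\u_Y\|_\infty \leq \|\Phi_N^{-1}\|_\infty \|u\|_{\infty,\Omega}.
$$
Using property (2) of the stable local polynomial reproduction ($\sum_j|\phi_j(x)|\leq C_1$), I then estimate pointwise
$$
|\mathcal{P}_N u(x)| \leq \Big(\sum_{j=1}^N |\phi_j(x)|\Big)\|\c\|_\infty \leq C_1 \|\Phi_N^{-1}\|_\infty \|u\|_{\infty,\Omega},
$$
so that $\|\mathcal{P}_N\|\leq c_P := C_1\sup_N\|\Phi_N^{-1}\|_\infty<\infty$ by hypothesis. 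This settles the first assertion.

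For the uniform convergence, the key observation is that $\mathcal{P}_N$ is a projection onto $V_N$: for any $v\in V_N$, writing $v=\sum_j \phi_j a_j$ and using that $\Phi_N$ is nonsingular shows $\mathcal{P}_N v=v$; in particular $\mathcal{P}_N s_{u,X}=s_{u,X}$. I then use the standard projection identity
$$
u-\mathcal{P}_N u = (u-s_{u,X}) - \mathcal{P}_N(u-s_{u,X}) = (I-\mathcal{P}_N)(u-s_{u,X}),
$$
which yields $\|u-\mathcal{P}_N u\|_\infty \leq (1+c_P)\|u-s_{u,X}\|_\infty$. Theorem~\ref{thm-convergence} gives $\|u-s_{u,X}\|_\infty\to 0$ as $h_{X,\Omega}\to 0$ for quasi-uniform $X$, which (together with $h_{X,\Omega}\to 0$ being implicit in $N\to\infty$ for quasi-uniform sequences of increasing size) delivers $\mathcal{P}_N u\to u$ uniformly.

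The main obstacle, conceptually, is nothing deep---everything reduces to matching the two ingredients from the previous section (the uniform bound $\sum_j|\phi_j(x)|\leq C_1$ and Theorem~\ref{thm-convergence}) with the hypothesized invertibility bound on $\Phi_N$. The only subtle point is justifying $\mathcal{P}_N v=v$ for all $v\in V_N$, which needs the nonsingularity of $\Phi_N$ already built into the definition of $\mathcal{P}_N$; once this is in place, the projection argument above is immediate.
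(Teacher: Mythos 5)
Your proof is correct and follows essentially the same route as the paper: the bound $\|\mathcal P_N\|\leq C_1\|\Phi_N^{-1}\|_\infty$ via the coefficient vector $\c=\Phi_N^{-1}\u$ and the $L_1$-stability of the shape functions, followed by the decomposition $u-\mathcal P_N u=(I-\mathcal P_N)(u-s_{u,X})$ (the paper writes this as a triangle inequality through $\mathcal P_N\wh u=\wh u$) and an appeal to Theorem~\ref{thm-convergence}. Your explicit justification that $\mathcal P_N v=v$ on $V_N$ via the nonsingularity of $\Phi_N$ is a small but welcome addition that the paper leaves implicit.
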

\begin{proof}
First
\begin{equation*}
\mathcal P_N u(x)=\sum_{j=1}^N \phi_j(x) c_j,\; \mbox { and }\; \mathcal P_Nu(y_i)=u(y_i),\, 1\leq i\leq N,
\end{equation*}
give
$\c=\Phi^{-1}_N\u$.
On the other hand we have
$$
\|\mathcal P_Nu\|_\infty\leq \|\c\|_\infty \max_{x\in\Omega}\sum_{j=1}^N|\phi_j(x)|\leq C_1\|\Phi^{-1}_N\|_\infty\|u\|_\infty.
$$
The last inequality is satisfied because of the $L_1$ stability of the MLS shape functions (the second property of a stable local polynomial reproduction).
Thus we can write
$$
\|\mathcal P_N\|=\sup_{u\in C(\Om)} \frac{\|\mathcal P_Nu\|_\infty}{\|u\|_\infty}\leq C_1\|\Phi^{-1}_N\|_\infty,
$$
which leads to
\begin{equation}\label{5cpbound}
c_P:=\sup_N \|\mathcal P_N\| <\infty.
\end{equation}
Finally, if $\wh u$ is the MLS approximation of $u$ on $X$ then
\begin{equation*}%\label{5pnu-to-u}
\begin{split}
\|\mathcal P_Nu-u\|_\infty&\leq \|\mathcal P_Nu-\mathcal P_N\wh u\|_\infty+\|u-\wh u\|_\infty\\
%&\leq (1+c_P)\|u-\wh u\|_\infty\leq (1+c_P)Ch_{X,\Omega}^{m+1}\|u\|_{W^{m+1}_{\infty}(\Omega)}.
&\leq (1+c_P)\|u-\wh u\|_\infty \\
&\leq C(1+c_P) \omega(u,h_{X,\Omega})
\end{split}
\end{equation*}
In the first inequality we have used
$\mathcal P_N\wh u =\wh u$, and in the last one we have applied
%the error bound \eqref{LPRerrorLp} of the MLS approximation.
Theorem \ref{thm-convergence}.
Since the points are quasi uniform, $N\to \infty$ implies $h_{X,\Om}\to 0$ and
$\mathcal P_Nu\rightarrow u$, uniformly.
\end{proof}
\begin{Remark}
Experiments show that $\|\Phi^{\dag}_N\|_\infty$ is of order $1$ independent of the fill distance $h_{X,\Omega}$ even if $M=N$. But it remains to prove this assertion, theoretically.
\end{Remark}
In the following lemma we prove that under some conditions
$\{\mathcal F_N\mathcal P_N,\; N\geq1\}$ is a collectively compact family of pointwise convergent operators.
\begin{Lemma}\label{5lemmaknpn}
Assume that
$\{\mathcal F_N,\; N\geq1\}$
is a collectively compact family of pointwise convergent operators on
$\mathcal X=C(\Omega)$. Then
$\{\mathcal F_N\mathcal P_N,\; N\geq1\}$
is a collectively compact family of pointwise convergent operators on $\mathcal X$.
\end{Lemma}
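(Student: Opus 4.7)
The plan is to verify the two defining properties of a collectively compact family of pointwise convergent operators, namely (a) pointwise convergence $\mathcal F_N\mathcal P_N u\to\mathcal F u$ for every $u\in C(\Omega)$, and (b) collective compactness of $\{\mathcal F_N\mathcal P_N u:N\geq 1,\ \|u\|_\infty\leq 1\}$ in $C(\Omega)$. The uniform bound $\|\mathcal P_N\|\leq c_P$ from Lemma \ref{5lemmapn} and the corresponding uniform bound on $\|\mathcal F_N\|$ (which follows either from pointwise convergence via the Banach--Steinhaus theorem, or directly from collective compactness, since a collectively compact family is automatically uniformly bounded) will be the main technical tools.

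For pointwise convergence, I would fix $u\in C(\Omega)$ and split
\begin{equation*}
\mathcal F_N\mathcal P_N u - \mathcal F u \;=\; \mathcal F_N(\mathcal P_N u - u) \;+\; (\mathcal F_N u - \mathcal F u).
\end{equation*}
The second term tends to zero by the assumed pointwise convergence of $\mathcal F_N$. For the first term I would bound $\|\mathcal F_N(\mathcal P_N u - u)\|_\infty \leq \|\mathcal F_N\|\,\|\mathcal P_N u - u\|_\infty$, and then use the uniform bound on $\|\mathcal F_N\|$ together with Lemma \ref{5lemmapn}, which gives $\mathcal P_N u \to u$ uniformly for continuous $u$. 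This yields $\mathcal F_N\mathcal P_N u\to \mathcal F u$ in $C(\Omega)$.

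For collective compactness, the key observation is the set-theoretic inclusion
\begin{equation*}
\{\mathcal F_N\mathcal P_N u : N\geq 1,\ \|u\|_\infty\leq 1\} \;\subset\; \{\mathcal F_N v : N\geq 1,\ \|v\|_\infty\leq c_P\},
\end{equation*}
which holds because $\|\mathcal P_N u\|_\infty\leq c_P\|u\|_\infty\leq c_P$ for every admissible $u$. The right-hand set is just $c_P$ times $\{\mathcal F_N w : N\geq 1,\ \|w\|_\infty\leq 1\}$, which by hypothesis has compact closure in $C(\Omega)$. Scaling preserves relative compactness, so the larger set is relatively compact; hence so is the original set, being a subset of a relatively compact set in the metric space $C(\Omega)$.

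The main obstacle I anticipate is rather mild: ensuring that a uniform bound on $\|\mathcal F_N\|$ is actually in force. The cleanest justification is to note that the collective compactness of $\{\mathcal F_N\}$ means $\{\mathcal F_N u:N\geq 1,\ \|u\|\leq 1\}$ is relatively compact, hence bounded, which gives $\sup_N\|\mathcal F_N\|<\infty$ immediately; alternatively one can cite Banach--Steinhaus using pointwise convergence on the Banach space $C(\Omega)$. Everything else is a routine combination of Lemma \ref{5lemmapn} with the hypotheses on $\{\mathcal F_N\}$.
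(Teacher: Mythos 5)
Your proposal is correct and follows essentially the same route as the paper: the same splitting $\mathcal F_N\mathcal P_N u-\mathcal Fu=\mathcal F_N(\mathcal P_Nu-u)+(\mathcal F_Nu-\mathcal Fu)$ for pointwise convergence, and the same inclusion of $\{\mathcal F_N\mathcal P_Nu:\|u\|_\infty\leq1\}$ into $\{\mathcal F_Nv:\|v\|_\infty\leq c_P\}$ for collective compactness. Your extra care in justifying $\sup_N\|\mathcal F_N\|<\infty$ (via Banach--Steinhaus or boundedness of relatively compact sets) only makes explicit what the paper asserts in passing.
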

\begin{proof}
From
(\ref{5cpbound}) we have
$c_P\equiv\sup\|\mathcal P_N\|<\infty$. The pointwise convergence of
$\{\mathcal F_N\}$
implies that
$c_F\equiv\sup \|\mathcal F_N\|<\infty$. Together, these imply the uniform boundedness of
$\{\mathcal F_N\mathcal P_N\}$
with a bound of
$c_Pc_F$.
For the pointwise convergence on $C (\Omega)$ we have
\begin{align*}
\|\mathcal Fu-\mathcal F_N\mathcal P_Nu \|_\infty&\leq
\|\mathcal Fu-\mathcal F_Nu \|_\infty+\|\mathcal F_N(u-\mathcal P_Nu) \|_\infty\\
&\leq \|\mathcal Fu-\mathcal F_Nu \|_\infty+c_F\|u-\mathcal P_Nu \|_\infty,
\end{align*}
and the convergence now follows from that of
$\{\mathcal F_Nu\}$
and
$ \{\mathcal P_Nu\}$.
To prove the collective compactness of
$\{\mathcal F_N\mathcal P_N\}$
we must show that
$$
\mathcal K=\{\mathcal F_N\mathcal P_Nu\,:\, N\geq1,\; \|u\|_\infty\leq1\}
$$
has a compact closure in
$C(\Omega)$.
From
(\ref{5cpbound})
we have
$$
\mathcal K\subset\{\mathcal F_Nu\,:\, N\geq1,\; \|u\|_\infty\leq c_P\}
$$
which proves the assertion because
$\{\mathcal F_N\}$ is collectively compact.
\end{proof}
Now
(\ref{5kkn}) is satisfied by replacing $\mathcal F_N$ by
$\mathcal F_N\mathcal P_N$, and we can apply Theorem
\ref{5thmst}.
\begin{Theorem}\label{5thmfinal}
Let
$\Omega\subset\R^d$
be a compact set with a Lipschitz boundary, and  the quasi uniform set $X=\{x_1,\ldots,x_N\}\subset \Omega$ be a set of trial points with fill distance $h_{X,\Omega}$.
%For all $\mathbb P_m^d$-unisolvent set $X=\{x_1,\ldots,x_N\}\subset \Omega$
%with fill distance $h_{X,\Omega}$ and quasi uniform condition (\ref{quasi-uniform}), and all $u\in C^{m+1}(\Omega)$, $m\in \N$,
Let
$\{\mathcal P_N\}$ be a family of interpolant operators from
$C(\Omega)$ to $V_N=\mathrm{span}\{\phi_1,\ldots,\phi_N\}$ on test points $Y=\{y_1,\ldots,y_N\}\subset\Omega$,
where $\phi_j$ are the MLS shape functions based on $X$ and polynomial space $\mathbb P_m^d$.
Assume that the distribution of points is well enough to ensure the non-singularity of $\Phi_N$, and $\|\Phi_N^{-1}\|_\infty=\mathcal O(1)$.
Further, assume that
$\{\mathcal F_N\}$ in
(\ref{5integraloper})
is a collectively compact family of pointwise convergent operators on
$C(\Omega)$.
Finally, assume that
$(\lambda-\mathcal F)u=f$ is uniquely solvable for all
$f\in C(\Omega)$. Then for all sufficiently large $N$, say $N\geq N_0$, the operator
$(\lambda-\mathcal F_N\mathcal P_N)^{-1}$
exists and it is uniformly bounded. In addition, for the iterative solution
$v_N$
for equation
$(\lambda-\mathcal F_N\mathcal P_N)v_N=f$
we have
\begin{equation}\label{5finalerr1}
\|u-v_N\|_\infty\leq c_I \big\{\|\mathcal F_Nu-\mathcal Fu\|_\infty +
c_F(1+c_P)Ch_{X,\Omega}^{m+1}\|u\|_{W^{m+1}_{\infty}(\Omega)}\big\},
\end{equation}
provided that $u\in W^{m+1}_{\infty}(\Omega)$,
and for the discrete collocation solution
$u_N$
of equation
$(\lambda-\mathcal P_N\mathcal F_N) u_N=\mathcal P_N f$
we have
\begin{equation}\label{5finalerr2}
\|u- u_N\|_\infty\leq c_I\big \{c_P\|\mathcal F_Nu-\mathcal Fu\|_\infty +
(1+c_Pc_F)c_F(1+c_P)Ch_{X,\Omega}^{m+1}\|u\|_{W^{m+1}_{\infty}(\Omega)}\big\},
\end{equation}
where
$c_I<\infty$
is a bound for
$(\lambda-\mathcal F_N\mathcal P_N)^{-1}$.
\end{Theorem}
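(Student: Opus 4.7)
The plan is to apply Theorem \ref{5thmst} to the pair $\mathcal{T}=\mathcal{F}$, $\mathcal{S}=\mathcal{F}_N\mathcal{P}_N$ in the Banach space $C(\Omega)$. The hypotheses transfer almost verbatim: Lemma \ref{5lemmaknpn} makes $\{\mathcal{F}_N\mathcal{P}_N\}$ a collectively compact, pointwise convergent family, so in particular each $\mathcal{F}_N\mathcal{P}_N$ is compact, and the Atkinson lemma cited above \eqref{5kkn} gives $\|(\mathcal{F}-\mathcal{F}_N\mathcal{P}_N)\mathcal{F}_N\mathcal{P}_N\|\to 0$. The unique-solvability assumption provides $(\lambda-\mathcal{F})^{-1}$ as a bounded operator on $C(\Omega)$. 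Hence for every sufficiently large $N$, say $N\geq N_0$, the smallness condition \eqref{5thmsteq1} is satisfied and \eqref{5thmsteq2} yields a uniform bound $c_I$ on $\|(\lambda-\mathcal{F}_N\mathcal{P}_N)^{-1}\|$.

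Given this uniform bound, the iterated error estimate is read off from \eqref{5thmsteq3} applied to $(\lambda-\mathcal{F})u=f$ and $(\lambda-\mathcal{F}_N\mathcal{P}_N)v_N=f$ (which is \eqref{5iteratedcomp}):
\[
\|u-v_N\|_\infty\leq c_I\,\|\mathcal{F}u-\mathcal{F}_N\mathcal{P}_N u\|_\infty.
\]
I would then split the consistency residual by adding and subtracting $\mathcal{F}_N u$, so that
\[
\|\mathcal{F}u-\mathcal{F}_N\mathcal{P}_N u\|_\infty\leq \|\mathcal{F}u-\mathcal{F}_N u\|_\infty+c_F\|u-\mathcal{P}_N u\|_\infty.
\]
The projection error $\|u-\mathcal{P}_N u\|_\infty$ is handled exactly as in Lemma \ref{5lemmapn}: writing $u-\mathcal{P}_N u=(u-\wh u)+\mathcal{P}_N(\wh u-u)$ together with $\mathcal{P}_N\wh u=\wh u$ gives $\|u-\mathcal{P}_N u\|_\infty\leq(1+c_P)\|u-\wh u\|_\infty$, and since $u\in W^{m+1}_\infty(\Omega)$ and $\Omega$ has Lipschitz boundary, Theorem \ref{thm-errorLq} delivers $\|u-\wh u\|_\infty\leq Ch_{X,\Omega}^{m+1}\|u\|_{W^{m+1}_\infty(\Omega)}$. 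Collecting these pieces produces \eqref{5finalerr1}.

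For the discrete collocation solution, the entry point is the identity $\mathcal{P}_N v_N=u_N$ already derived after \eqref{5iterated}. I would decompose
\[
u-u_N=(u-\mathcal{P}_N u)+\mathcal{P}_N(u-v_N),
\]
use $\|\mathcal{P}_N\|\leq c_P$, and substitute the bound from the previous paragraph for $\|u-v_N\|_\infty$ together with the MLS/projection estimate for $\|u-\mathcal{P}_N u\|_\infty$; a short rearrangement then yields \eqref{5finalerr2}. The only non-routine step in the whole argument is the verification of the smallness hypothesis \eqref{5thmsteq1} for $\mathcal{F}_N\mathcal{P}_N$, which is where collective compactness of the family is genuinely used; everything past that is triangle-inequality bookkeeping together with the already-established MLS error bound. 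A minor caveat worth noting is that $c_I$, and hence both estimates, are only defined once $N$ is large enough for Theorem \ref{5thmst} to apply.
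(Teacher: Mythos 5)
Your proposal follows essentially the same route as the paper's own proof: the same application of Theorem \ref{5thmst} with $\mathcal T=\mathcal F$, $\mathcal S=\mathcal F_N\mathcal P_N$ via Lemma \ref{5lemmaknpn} and Atkinson's lemma, the same splitting $\|\mathcal Fu-\mathcal F_N\mathcal P_Nu\|\leq\|\mathcal Fu-\mathcal F_Nu\|+c_F\|u-\mathcal P_Nu\|$ with the $(1+c_P)\|u-\wh u\|$ projection bound and Theorem \ref{thm-errorLq}, and the same decomposition $u-u_N=(u-\mathcal P_Nu)+\mathcal P_N(u-v_N)$ for the discrete collocation estimate. The only imprecision --- leaving the final bookkeeping for \eqref{5finalerr2} as ``a short rearrangement'' --- is exactly the level of detail the paper itself gives, so the proposal is correct and matches the paper.
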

\begin{proof}
According to the assumptions and using Lemma
\ref{5lemmaknpn}
we conclude that
$\{\mathcal F_N\mathcal P_N\}$
is a collectively compact family of pointwise convergent operators.
By Lemma
\ref{5lemmakkn}
and the discussions after that, we have
$$
\|(\mathcal F-\mathcal F_N\mathcal P_N)\mathcal F_N\mathcal P_N\|\rightarrow 0.
$$
Thus, \eqref{5thmsteq1} is satisfied for $N\geq N_0$ if we
insert
$\mathcal T=\mathcal F$
and
$ \mathcal S=\mathcal F_N\mathcal P_N$
in to Theorem \ref{5thmst}.
Since
$(\lambda-\mathcal F)u=f$ is uniquely solvable we have
$(\lambda-\mathcal F)^{-1}\leq c_0<\infty$.
On the other hand
$\|\mathcal F_N\mathcal P_N\|\leq c_Pc_F$.
Consequently,
(\ref{5thmsteq2})
implies
$$\|(\lambda-\mathcal F_N\mathcal P_N)^{-1}\|\leq \sup_{N\geq N_0}
\frac{1+c_0c_Pc_F}{|\lambda|-c_0\|(\mathcal F-\mathcal F_N\mathcal P_N)\mathcal F_N\mathcal P_N\|}:=c_I<\infty,
$$
which proves the first assertion.
If we set
$w=u$
and
$z=v_N$
in (\ref{5thmsteq3}) then
\begin{equation*}
\begin{split}
\|u-v_N\|_\infty &\leq \|(\lambda-\mathcal F_N\mathcal P_N)^{-1}\|\|\mathcal Fu-\mathcal F_N\mathcal P_Nu\|_\infty\\
&\leq c_I\big\{\|\mathcal Fu-\mathcal F_Nu\|_\infty+\|\mathcal F_N(u-\mathcal P_Nu)\|_\infty \big\}\\
&\leq c_I\big\{\|\mathcal Fu-\mathcal F_Nu\|_\infty+c_F\|u-\mathcal P_Nu\|_\infty \big\}\\
&\leq c_I\big\{\|\mathcal Fu-\mathcal F_Nu\|_\infty+c_F(\|u-\wh u\|_\infty+\|\mathcal P_Nu-\mathcal P_N\wh u\|_\infty) \big\}\\
&\leq c_I\big\{\|\mathcal Fu-\mathcal F_Nu\|_\infty+c_F(1+c_P)\|u-\wh u\|_\infty \big\}\\
&\leq c_I\big\{ \|\mathcal Fu-\mathcal F_Nu\|_\infty +
c_F(1+c_P)Ch_{X,\Omega}^{m+1}\|u\|_{W^{m+1}_{\infty}(\Omega)}\big\}.
\end{split}
\end{equation*}
The last inequality is implied by
(\ref{LPRerrorLp}).
Moreover,
\begin{align*}
u-u_N&=u-\mathcal P_Nv_N=(u-\mathcal P_Nu)+\mathcal P_N(u-v_N),\\
\|u-u_N\|_\infty&\leq \|u-\mathcal P_Nu\|_\infty+c_P\|u-v_N\|_\infty
\end{align*}
lead to
(\ref{5finalerr2}).
\end{proof}
Theorem \ref{5thmfinal} shows that, both the quadrature and the MLS approximation error bounds affect the final estimation.
If for a sufficiently smooth kernel $\kappa(x,s)$ a high order quadrature is employed then the total error is dominated by the error of the MLS approximation.
For numerical results see \cite{mirzaei-dehghan:2010-1}.
%%%%%%%%%%%%%%%%%%%%%%%%%%%%5
%%%%%%%%%%%%%%%%%%%%%%%%%%%%


\begin{thebibliography}{99}
\bibitem{atkinson:1997-1} K. Atkinson, {\em The Numerical Solution of Integral Equations of the Second Kind}, Cambridge University Press, 1997.

\bibitem{mirzaei-dehghan:2010-1}D. Mirzaei, M. Dehghan, A meshless based method for solution of integral equations, Appl. Numer. Math., 60 (2010) 245--262.

\bibitem{mirzaei:2015-1}D. Mirzaei, Analysis of moving least squares approximation revisited, J. Comput. Appl. Math., 282 (2015) 237-250.

\bibitem{wendland:2005-1}
H. Wendland, {\em Scattered Data Approximation}, Cambridge University Press, 2005.
\end{thebibliography}
\end{document}